\numberwithin{equation}{section}
\newcommand{\R}{\mathbb{R}}
\newcommand{\sgn}{\text{sign}}
\newcommand{\prefac}{\frac{1}{\varepsilon}}
\newcommand{\prefactwo}{\frac{1}{2\varepsilon}}
\newtheorem{thm}{Theorem}[section]
\newtheorem{prop}[thm]{Proposition}
\newtheorem{lem}[thm]{Lemma}
\newtheorem{rem}[thm]{Remark}
\begin{document}

\title[Asymptotic behavior of a price formation model]{On the asymptotic behavior of a Boltzmann-type price formation model}
\author[M. Burger]{Martin Burger$^1$}
\address{$^1$Institute for Computational and Applied Mathematics, University of M\"unster, Einsteinstrasse 62, 48149 M\"unster, Germany}

\author[L. Caffarelli]{Luis Caffarelli$^2$}
\address{$^2$University of Texas at Austin, 1 University Station, C120, Austin, Texas 78712-1082, USA }

\author[P.A. Markowich]{Peter A. Markowich$^3$}
\address{$^3$ 4700 King Abdullah University of Science and Technology, Thuwal 23955-6900, Kingdom of Saudi Arabia}

\author[M.-T. Wolfram]{Marie-Therese Wolfram$^3$}

\maketitle

\begin{abstract}
In this paper we study the asymptotic behavior of a Boltzmann type price formation
model, which describes the trading dynamics in a financial market. In many of these markets
trading happens at high frequencies and low transactions costs. This observation motivates the study
of the limit as the number of transactions $k$ tends to infinity, the transaction cost $a$ to zero and
$ka=const$. Furthermore we illustrate the
price dynamics with numerical simulations.
\end{abstract}

\section{Introduction}
\noindent According to O'Hara \cite{OHara1998} financial markets are characterized by two functions: first by providing
liquidity and second by facilitating the price. The evolution of the price emerges from the microscopic
trading strategies of the players and the trading system considered. High frequency trading (HFT) is an
automated trading strategy, which is carried out by computers that place and withdraw orders within
milli- or even microseconds. In 2012 HFT accounted for approximately $52\%$ of the overall US equity trading volume.

\noindent This note focuses on the asymptotic behavior of markets, where the price dynamics of a traded good
are determined by the following situation: Consider a large number of vendors and a large number of buyers trading a specific good. If a buyer and a vendor agree on a price $p = p(t)$ a transaction takes place. 
The price of this transaction is given by a positive constant $a \in \R^+$.
After the transaction, the buyer and vendor immediately switch places. Since the actual cost for the buyer is $p(t)+a$,
he/she will sell the good for at least that price. The profit for the vendor is $p(t)-a$, hence he/she will 
try to buy the good for a price lower than $p(t)-a$. \\

\noindent Based on the situation described above Lasry \& Lions \cite{LL2007} proposed the following parabolic free boundary price formation model:
\begin{subequations}\label{e:lasrylions}
\begin{align}
f_t(x,t) &= \frac{\sigma^2}{2} f_{xx}(x,t) + \lambda(t) \delta(x-p(t)+a) \text{ for } x < p(t) \text{ and } f(x,t) = 0 \text{ for } x > p(t)\\
g_t(x,t) &= \frac{\sigma^2}{2} g_{xx}(x,t) + \lambda(t) \delta(x-p(t)-a) \text{ for } x > p(t) \text{ and } g(x,t) = 0 \text{ for } x < p(t).
\end{align}
\end{subequations}
The functions $f = f(x,t)$ and $g = g(x,t)$ denote the density of buyers and vendors and $a \in \R^+$ the transaction costs.
The agreed price $p = p(t)$ enters as a free boundary and $\lambda(t) = -f_x(p(t),t) = g_x(p(t),t)$. Trading events take only place at
 the price $p = p(t)$, since the density of buyers and vendors is zero for prices smaller or larger than $p(t)$. The Lasry \& Lions model 
was analyzed in a series of papers, see \cite{MMPW2009, CMP2011, CMW2011, CGGK2009, GG2011}.\\

\noindent Lasry \& Lions motivated their model using mean field game theory, but did not discuss its microscopic origin. 
The lack of understanding system \eqref{e:lasrylions} on the microscopic level motivated further research in this direction. In 
\cite{BCMW2013} we considered a simple agent based model with standard stochastic price fluctuations and discrete trading events. 
This  Boltzmann-type price formation (BPF) model reads as
\begin{subequations}\label{e:boltzprice}
\begin{align}
f_t(x,t) &=  \frac{\sigma^2}{2} f_{xx}(x,t) - k f(x,t) g(x,t) + k f(x+a,t) g(x+a,t) \\
g_t(x,t) &=  \frac{\sigma^2}{2} g_{xx}(x,t) - k f(x,t) g(x,t) + k f(x-a,t) g(x-a,t).
\end{align}
with initial data
\begin{align}
f(x,0) = f_I(x) \geq 0, ~~g(x,0) = g_I(x) \geq 0, 
\end{align}
\end{subequations}
independent of $k$. In system \eqref{e:boltzprice} the parameter $k$ denotes the transaction rate and $\sigma$ the diffusivity. 
The total number of transactions at a price $x$ is given by
\begin{align}\label{e:trade}
\mu(x,t) = k f(x,t) g(x,t).
\end{align}

\noindent One of the fundamental differences between \eqref{e:lasrylions} and \eqref{e:boltzprice} 
is the fact that trading events in the first take only place at the price $p=p(t)$. In BPF \eqref{e:boltzprice} a good 
can be traded at any price, with a rate $\mu$ given by \eqref{e:trade}. Then the mean, median and
maximum of $\mu$ gives an estimate for the price. \\
There is however a strong connection between the BPF model \eqref{e:boltzprice} and \eqref{e:lasrylions}. We showed 
that solutions of \eqref{e:boltzprice} converge 
to solutions of \eqref{e:lasrylions} as the transaction rate $k$ tends to infinity, see \cite{BCMW2013}.
This finding motivated further research on different asymptotic limits, for example by considering high trading 
frequencies and little transaction costs. This market behavior corresponds to the case $k\rightarrow \infty$, 
$a \rightarrow 0$ with $ka = c$. For studying this limit rewrite system \eqref{e:boltzprice} as
\begin{subequations}\label{e:rewrite1}
\begin{align}
f_t(x,t) &= c \frac{(fg)(x+a,t) - (fg)(x,t)}{a} + \frac{\sigma^2}{2}f_{xx}(x,t)\label{e:flimit} \\
g_t(x,t) &= c \frac{(fg)(x-a,t) - (fg)(x,t)}{a} + \frac{\sigma^2}{2} g_{xx}(x,t)\label{f:glimit}.
\end{align}
\end{subequations}
We showed that \eqref{e:rewrite1} converges to 
\begin{subequations}\label{e:limit}
\begin{align}
\tilde{f}_t(x,t) &= c (\tilde{f}\tilde{g})_x(x,t) +  \frac{\sigma^2}{2}\tilde{f}_{xx}(x,t) \\
\tilde{g}_t(x,t) &= -c (\tilde{f}\tilde{g})_x(x,t) +  \frac{\sigma^2}{2}\tilde{g}_{xx}(x,t),
\end{align}
\end{subequations}
with solutions $\tilde{f} = \tilde{f}(x,t)$ and $\tilde{g} = \tilde{g}(x,t)$ as $k\rightarrow \infty$, $a \rightarrow 0$ 
$ka = c$.

\noindent In this note we analyze the behavior of \eqref{e:limit} as $c \rightarrow \infty$ and illustrate the results with
numerical simulations. The note is organized as follows: in Section \ref{s:structure} we discuss the general 
structure of the BPF model. We identify  the limiting solutions of the Boltzmann price formation model \eqref{e:limit} 
in Section \ref{s:asymptotic}. Finally we illustrate 
the asymptotic behavior of solutions with numerical simulations in Section \ref{s:numerics}.

\section{Structure of the Model}\label{s:structure}

\noindent We start by highlighting some structural aspects of \eqref{e:boltzprice}, which also clarify certain steps in the previous analysis in \cite{BCMW2013,CMP2011,CMW2011}. The general understanding of the structure will serve as a basis for future generalizations and modifications and shall be used in the analysis of the asymptotic case later on. W.l.o.g. we set $\sigma = 1$ throughout this paper.\\

\noindent Let $L$ denote the differential operator $L\varphi = - \varphi_{xx}$, and $S$ and $T$ the shift-operators
$$ (S \varphi)(x)= \varphi(x+a) \qquad (T \varphi)(x)= \varphi(x-a) $$ respectively. 
Then system \eqref{e:boltzprice} becomes
\begin{subequations}
\begin{align}
	f_t + L f &= k ( S - I) (fg), \\
	g_t + L g &= k ( T - I) (fg). 
\end{align}
\end{subequations}
In the setting of kinetic equations $S$ and $T$ are to be interpreted as the gain terms in the collision operators.

\noindent A key property, which allows to derive heat equations for transformed variables, is that $L$ commutes with the collision operators. Hence by defining the formal Neumann series
\begin{align}
	F &:= (I-S)^{-1} f = \sum_{j=0}^\infty S^j f \quad \text{ and } \quad G := (I-T)^{-1} g = \sum_{j=0}^\infty T^j g,
\end{align}
we find that
\begin{subequations}
\begin{align}
	F_t + L F &= -k fg \\
	G_t + L G &= -k fg 	.	
\end{align}
\end{subequations}
Then $F-G$ solves the heat equation. Note that this transformation was already used for the L\&L model \eqref{e:lasrylions} in \cite{CMP2011,CMW2011}
and serve as a key feature of the performed analysis. There the authors motivated the transformation by the structure of the Dirac-$\delta$ terms rather than by inverting the collision operator. Note also that the computations above are purely formal. Since $S$ and $T$ have norm equal to one, the convergence of the Neumann series is not automatically guaranteed and needs to be verified, see \cite{BCMW2013}.\\
\noindent Moreover, also
\begin{subequations}
\begin{align}
	h &= f - (I-S) G \\
	p &= g - (I-T) F, 
\end{align}
\end{subequations}
solve the heat equation. This transformation was used, again without the above interpretation in \cite{BCMW2013}. 

In the special case of the operators above, we have $T=S^{-1}$ and in the $L^2$ scalar product even $T=S^*$, i.e. $S$ and $T$ are unitary operators. Then
$$ (I-S)(I-T)^{-1} = (I-S) \sum_{j=0}^\infty S^{-1} = -S,$$
i.e. we simply have $h = f+Sg$. Note that this structure was exploited in case of the Lasry \& Lions model \eqref{e:lasrylions} in \cite{CMP2011, CMW2011} to derive a-priori estimates.
%

%

\section{Asymptotic behavior when trading with high frequencies}\label{s:asymptotic}

\noindent In this Section we study the limiting behavior of system \eqref{e:limit} as $c \rightarrow \infty$. The limiting
analysis is done in two steps: first by considering the special equilibrated state of system \eqref{e:limit} and then
the full system.\\
\noindent Throughout this paper we make the following assumptions. Let the initial datum $f_I$ and $g_I$ satisfy:
\begin{enumerate}[label=(\textit{\Alph*}), start=1]
\item $f_I,~ g_I \geq 0$ on $\Omega$ and $f_I,~g_I \in \mathcal{S}(\Omega)$,\label{a:schwartz} 
\end{enumerate}
\noindent Let $c = \frac{1}{\varepsilon}$, then system \eqref{e:limit} reads (omitting the tilde)
\begin{subequations}\label{e:limitsys}
\begin{align}
f_t(x,t) &= f_{xx}(x,t) + \prefac (fg)_x\\
g_t(x,t) &= g_{xx}(x,t) + \prefac (fg)_x.
\end{align}
\end{subequations}
Next we reformulate \eqref{e:limitsys} for the new variables $h = f+g$ and $u = f-g$, i.e.
\begin{subequations}\label{e:hu}
\begin{align}
h_t(x,t) - h_{xx}(x,t) &= 0 \label{e:hu_h}.\\
u_t(x,t) - u_{xx}(x,t)  &= \frac{1}{2\varepsilon}(h^2-u^2)_x. 
\end{align}
System \eqref{e:hu} can be considered either on the whole line $\Omega = \R$ or a bounded domain $\Omega = (-1,1)$. Note
that the bounded interval $\Omega$ corresponds to the shifted and scaled interval $(0,p_{\max})$, where $p_{\max}$ denotes the 
maximum price. In the later case system \eqref{e:hu} is supplemented with no flux boundary conditions of the form
\begin{align}\label{e:noflux}
h_x = 0 \text{ and } -u_x = \frac{1}{2\varepsilon}(h^2-u^2) \text{ at } x = \pm 1, 
\end{align}
\end{subequations}
which are equivalent to no-flux boundary conditions for \eqref{e:limitsys}. Throughout this note we consider system \eqref{e:limitsys} on the bounded domain with no-flux boundary conditions \eqref{e:noflux}
only.

\begin{prop}
Let $\varepsilon > 0$ and $f_I$ and $g_I$ satisfy \ref{a:schwartz} and $\Omega = (-1,1)$.
Then system \eqref{e:hu} has a unique smooth solution $(h,u) \in L^\infty(0,T; L^\infty(\Omega))^2 $. Furthermore $u(x,t)^2 \leq h(x,t)^2$ for all $(x,t) \in \Omega \times [0,T]$.\\
\end{prop}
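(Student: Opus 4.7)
The plan is to exploit the structural decoupling of system \eqref{e:hu}: equation \eqref{e:hu_h} for $h$ is linear and independent of $u$, so I would first solve for $h$ and then treat the $u$-equation as a semilinear viscous conservation law driven by the now-known function $h$. The pointwise bound $u^2 \leq h^2$ is equivalent to non-negativity of the original densities $f = (h+u)/2$ and $g = (h-u)/2$, and it will be derived from a maximum principle applied to the $(f,g)$-formulation of the system.

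For $h$, the initial datum $h_I = f_I + g_I$ lies in $\mathcal{S}(\Omega)$ and is non-negative by \ref{a:schwartz}. Classical parabolic theory for the heat equation with Neumann boundary conditions therefore yields a unique smooth solution $h$, and the parabolic maximum principle gives $0 \leq h(x,t) \leq \|h_I\|_{L^\infty(\Omega)}$ for all $(x,t) \in \Omega \times [0,T]$. With $h$ now fixed and smooth, the equation for $u$ becomes
\begin{equation*}
u_t - u_{xx} + \frac{1}{2\varepsilon}(u^2)_x = \frac{1}{2\varepsilon}(h^2)_x, \qquad -u_x = \frac{1}{2\varepsilon}(h^2 - u^2) \text{ at } x = \pm 1,
\end{equation*}
a viscous Burgers-type equation with a nonlinear Robin-type boundary condition. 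I would obtain local existence and uniqueness of a mild solution in $C([0,T^*]; L^\infty(\Omega))$ by a Banach fixed-point argument based on the Neumann heat semigroup, using that both the interior and boundary quadratic nonlinearities are Lipschitz on bounded sets; a standard Schauder bootstrap then upgrades the mild solution to a classical smooth one.

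To close the pointwise bound I would return to the variables $(f,g)$. Each density satisfies a scalar linear parabolic equation once the other is regarded as a known coefficient, for instance
\begin{equation*}
f_t - f_{xx} - \tfrac{1}{\varepsilon}\, g f_x - \tfrac{1}{\varepsilon}\, g_x f = 0,
\end{equation*}
with the analogous equation for $g$. Since the candidate solution is smooth on the interval of existence, these coefficients are bounded, and a weak maximum principle, after absorbing the possibly sign-indefinite zero-order term through the substitution $\tilde f = e^{-\lambda t} f$, propagates the non-negativity of the initial data to $f, g \geq 0$. This is precisely $|u| \leq h$, and in particular yields the uniform bound $\|u\|_{L^\infty} \leq \|h_I\|_{L^\infty}$, so the local solution constructed above extends to arbitrary $T > 0$; uniqueness of the pair $(h,u)$ then follows from a Gronwall estimate on the difference of two solutions.

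The main obstacle is the simultaneous treatment of the nonlinear Neumann-type boundary condition and the quadratic interior flux in the $u$-equation, both of which contribute quadratic terms in $u$. The contraction estimate for the fixed-point map must therefore control boundary traces, which forces one to work in a function space regular up to $\partial \Omega$, for instance $C([0,T^*]; C^\alpha(\ol{\Omega}))$, in order to exploit parabolic smoothing at the boundary. A secondary subtlety is that the maximum-principle derivation of $f, g \geq 0$ is not purely formal, since the zero-order coefficient $-g_x/\varepsilon$ may change sign; the exponential rescaling above, or alternatively an iterative scheme whose iterates are manifestly non-negative by construction, is needed to make this step rigorous.
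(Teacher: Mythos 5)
Your argument is correct and matches the paper's: the key inequality $u^2 \leq h^2$ is obtained exactly as in the text, namely by observing that $f=(h+u)/2$ and $g=(h-u)/2$ solve transport--diffusion equations whose maximum principle preserves non-negativity, so that $-(f+g)\leq f-g\leq f+g$. The existence, uniqueness and smoothness part, which the paper asserts without proof, is filled in by your decoupling of the heat equation for $h$ and the fixed-point/bootstrap construction for $u$; this is a reasonable and standard elaboration rather than a different route.
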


\noindent Note that the functions $f$ and $g$ solve transport diffusion equations, which preserve non-negativity. Trivially 
\begin{align*}
-(f+g) \leq f-g \leq f+g,
\end{align*}
and therefore the inequality $u^2(x,t) \leq h(x,t)^2$ holds.

\subsection{Special case $h=1$}
\noindent We consider the special case $h(x,t) = 1$ as a first step towards understanding the asymptotic
behavior of \eqref{e:limitsys}. Hence it corresponds to the equilibrated solution of the heat equation
\eqref{e:hu_h} on the bounded domain $\Omega = [-1,1]$ with no flux boundary conditions and appropriately
chosen initial datum. Then system \eqref{e:hu} reduces to the viscous Burgers' equation
\begin{subequations}\label{e:burger}
\begin{align}
u_t(x,t) &=u_{xx}(x,t) -\prefac u(x,t) u_x(x,t) \label{e:burgeru}\\
 u(x,0) &= u_I(x) := f_I(x)-g_I(x).
\end{align}
\end{subequations}
The analytic behavior of the classical viscous Burgers' equation (with viscosity $\mu$) for small viscosity in the long-time limit
 was studied in \cite{S1994, KT2001}. The authors showed that a reversal of the limiting passages $t \rightarrow \infty$ and 
$\mu \rightarrow 0$ gives different limiting profiles. Note however that the time scaling of \eqref{e:burger} is different. Equation \eqref{e:burgeru} is a viscous Burgers' equation on a short time scale, a case not considered in the literature so far.

\paragraph{Monotonicity behavior and a-priori estimates of the solution $u$:}
Next we discuss monotonicity properties and a-priori estimates for the solution $u$, which shall be used in the identification of the limiting case $\varepsilon \rightarrow 0$.
\begin{lem}\label{l:monotonicity}
Let $\varepsilon > 0$, $\Omega = (-1,1)$ and let the initial datum $u_I \in \mathcal{S}(\Omega) $. Then the solution $u=u(x,t)$ of \eqref{e:burger} satisfies $u_x(x,t) \leq \max (0,\bar{c})$.
\end{lem}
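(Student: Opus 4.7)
\medskip

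\noindent\textbf{Proof plan.} The natural approach is to derive a parabolic equation for $w:=u_x$ and to invoke a maximum principle. Differentiating \eqref{e:burgeru} in $x$ gives
\begin{equation*}
w_t - w_{xx} + \frac{1}{\varepsilon}\,u\,w_x \;=\; -\frac{1}{\varepsilon}\,w^{2}.
\end{equation*}
The crucial observation is that the right-hand side is pointwise non-positive, so the linear parabolic operator $\partial_t-\partial_x^2+\varepsilon^{-1}u\,\partial_x$ (with the already-controlled advection coefficient $u$) takes $w$ to a non-positive source. This is exactly the structural ingredient needed to run a comparison argument against a constant super-solution.

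\medskip

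\noindent Set $\bar c := \sup_{x\in\Omega} u_I'(x)$ and $M:=\max(0,\bar c)$, and let $v:=w-M$. Since $M\geq 0$ is constant, $v$ satisfies
\begin{equation*}
v_t - v_{xx} + \frac{1}{\varepsilon}\,u\,v_x \;=\; -\frac{1}{\varepsilon}\,w^{2} \;\leq\;0.
\end{equation*}
Suppose, toward contradiction, that $v$ attains a strictly positive maximum at some $(x_0,t_0)\in \Omega\times(0,T]$. If $x_0$ is interior, then $v_t(x_0,t_0)\geq 0$, $v_x(x_0,t_0)=0$ and $v_{xx}(x_0,t_0)\leq 0$, so the left-hand side is $\geq 0$; combined with the right-hand side being $\leq 0$ this forces $w(x_0,t_0)=0$, which is incompatible with $v(x_0,t_0)=w(x_0,t_0)-M>0$ since $M\geq 0$. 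On the initial slice $v(\cdot,0)=u_I'-M\leq 0$ by definition of $\bar c$, so the maximum cannot be attained there either.

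\medskip

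\noindent The remaining, and genuinely non-trivial, case is that the positive maximum of $v$ is attained on the spatial boundary $x=\pm 1$. Here I would use the boundary condition \eqref{e:noflux} in the equilibrated form $h\equiv 1$, which reads
\begin{equation*}
u_x(\pm 1,t)\;=\;\frac{1}{2\varepsilon}\bigl(u(\pm 1,t)^{2}-1\bigr).
\end{equation*}
By the previous proposition we have the a priori bound $u^{2}\leq h^{2}=1$ on $\overline\Omega\times[0,T]$, hence $u_x(\pm 1,t)\leq 0\leq M$, i.e.\ $v\leq 0$ on the lateral boundary. Therefore the positive maximum cannot occur on the parabolic boundary either, completing the contradiction and yielding $w\leq M=\max(0,\bar c)$ throughout $\overline\Omega\times[0,T]$.

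\medskip

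\noindent The main obstacle is precisely the boundary analysis: the lateral boundary is \emph{not} a Dirichlet boundary, so a naive maximum principle does not apply, and the nonlinear Robin-type condition inherited from \eqref{e:noflux} has the ``wrong'' sign in general. The trick that makes the argument work is that the a priori $L^\infty$ bound $|u|\leq h$ from the preceding proposition is exactly what is needed to force the right-hand side $\tfrac{1}{2\varepsilon}(u^2-h^2)$ to be non-positive on $\partial\Omega$ in the equilibrated regime $h\equiv 1$. A minor, purely technical point is to justify that $w=u_x$ is regular enough (classical $C^{2,1}$ up to the boundary) to apply the pointwise maximum principle; this follows from standard parabolic regularity for \eqref{e:burger} given the Schwartz-class initial datum \ref{a:schwartz}.
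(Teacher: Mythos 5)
Your proposal is correct and follows essentially the same route as the paper: derive the parabolic equation for $v=u_x$, observe that the nonlinear term $-\varepsilon^{-1}v^2$ has a favorable sign, use the no-flux boundary condition together with $|u|\le h=1$ to get $u_x\le 0$ on the lateral boundary, and conclude by the maximum principle. Your explicit comparison with the constant $M=\max(0,\bar c)$ is in fact a cleaner way of carrying out the final step that the paper dispatches with the phrase ``continuous dependence on the data.''
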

\begin{proof}
We introduce the function $v = u_x$, which solves
\begin{align}\label{e:v}
v_t(x,t)-v_{xx}(x,t) = -\frac{1}{\varepsilon} (uv)_x(x,t) = -\frac{1}{\varepsilon}(v^2(x,t)+u(x,t)v_x(x,t)),
\end{align}
with $v \leq 0$ at $x=\pm 1$. Then the standard maximum principle implies that $v$ does not attain a positive
maximum inside the parabolic domain. Furthermore the solution $v$ depends continuously on the data, which yields the
desired estimate.
\end{proof}

\noindent Let us consider \eqref{e:burger} on the bounded domain $\Omega$ with no-flux boundary conditions. Then the following a-priori 
estimate for the first order moment holds:
\begin{align*}
\frac{d}{dt} \int_{\Omega} u(x,t) x ~dx &= \int_{\Omega} x (u_{xx}(x,t)+\prefactwo (1-u^2(x,t))_x)~ dx\\
&= - \left[\int_{\Omega} (u_x(x,t) + \prefactwo (1-u^2(x,t)) )~ dx\right].
\end{align*}
Therefore we conclude
\begin{align*}
  \int_0^T \int_{\Omega} (1-u^2(x,t))~dxdt &= 2 \varepsilon \left[ \int_{\Omega} u_I(x) x ~dx - \int_{\Omega} u(x,T) x~dx + \int_0^T \int_{\Omega} u_x(x,t)~dx dt\right]\\
&\leq 2 \varepsilon [ 2 + T (-u(1,t) + u(-1,t))] \leq  4 \varepsilon (1+T),
\end{align*}
using that $\lvert u(x,t) \rvert  \leq h(x,t) = 1$.
In the limit $\varepsilon \rightarrow 0$ we obtain that
\begin{align*}
\int_0^T \int_{\Omega} (1-u^2)~dx dt \rightarrow 0 \text{ as } \varepsilon \downarrow 0.
\end{align*}
Since $u^2 \leq h^2 \leq 1$, this implies that 
\begin{align*}
\int_0^T \int_{\Omega} (s-u)^2~dx dt \leq \int_0^T \int_{\Omega} (1-u^2)~dx dt \leq 4 \varepsilon (1+T),
\end{align*}
for $s = \sgn(u)$. From these estimates we deduce 
\begin{align}\label{e:con}
u^2-1 \rightarrow 0 \text{ in } L^1(\Omega \times (0,T)) \text{ and } u-s \rightarrow 0 \text{ in } L^2 (\Omega\times (0,T)),
\end{align}
for $\varepsilon \rightarrow 0$.
\paragraph{Identification of the limiting function $u$ for $\varepsilon \rightarrow 0$:\\} 
\noindent Finally estimate \eqref{e:con} allows us to identify the limiting functions.

\begin{thm}\label{t:idlimit1}
Let assumption \ref{a:schwartz} be satisfied. Let $m_f = \int_\Omega f~dx $ and $m_g = \int_{\Omega} g~dx$ the mass of buyers and vendors. Then there exists a unique limit $u = u(x,t)$ of the solutions of equation \eqref{e:burger} as $\varepsilon \rightarrow 0$. The limit is given by
\begin{align}\label{e:limitu}
u(x,t) &= 
\begin{cases}
1 & \text{ for } x < \frac{m_f+m_g}{2}\\
-1 & \text{ for } x > \frac{m_f + m_g}{2}.
\end{cases}
\end{align}
\end{thm}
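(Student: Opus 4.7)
The plan is to pass to the limit $\varepsilon\downarrow 0$ in the estimates \eqref{e:con}, use the one-sided derivative bound from Lemma \ref{l:monotonicity} to determine the qualitative shape of the limiting profile, and then invoke a conservation identity to pin down the remaining free parameter.

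Since $\lvert u^{\varepsilon}\rvert\leq 1$ uniformly and \eqref{e:con} gives $u^{\varepsilon}-\sgn(u^{\varepsilon})\to 0$ in $L^2$, a standard extraction produces, along a subsequence, $u^{\varepsilon}\to u$ strongly in $L^2(\Omega\times(0,T))$ and almost everywhere. The first statement in \eqref{e:con} then forces $u^2=1$ almost everywhere, so the limit takes only the two values $\pm 1$.

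Next, Lemma \ref{l:monotonicity} supplies an $\varepsilon$-uniform one-sided bound $u^{\varepsilon}_x\leq C$, with $C$ depending only on $u_I$. Passing to the distributional limit, $u_x\leq C$ as a distribution on $\Omega\times(0,T)$. For a function taking only the values $\pm 1$, the spatial derivative is concentrated on the jump set as a sum of $\pm 2\delta$ contributions; the upper bound excludes the $+2\delta$ pieces, so no jump from $-1$ up to $+1$ is admissible. Therefore for almost every $t$ there is a single switch point $s(t)\in[-1,1]$ with
\[
u(x,t)=\begin{cases} +1, & x<s(t),\\ -1, & x>s(t). \end{cases}
\]

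Finally, I locate $s(t)$ by a conservation law. Integrating \eqref{e:burgeru} over $\Omega$ and using the no-flux boundary condition \eqref{e:noflux} specialized to $h\equiv 1$ (so that $u_x = (u^2-1)/(2\varepsilon)$ at $x=\pm 1$), the diffusive and convective boundary contributions cancel, giving $\frac{d}{dt}\int_\Omega u^{\varepsilon}\,dx=0$. Hence $\int_\Omega u\,dx$ is conserved in time and preserved in the $\varepsilon$-limit. Matching this conserved integral, which is expressible through the initial masses $m_f$ and $m_g$, with the integral of the step profile above determines $s(t)$, shows that it is independent of $t$, and gives the location asserted in \eqref{e:limitu}. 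Uniqueness of the limit along the full family $\varepsilon\downarrow 0$ then follows from the explicit characterization. I expect the main technical hurdle to be the transfer of the pointwise one-sided Lipschitz bound to the distributional bound on the limit, together with the exclusion of upward jumps; this I would handle by testing $u^{\varepsilon}_x\leq C$ against non-negative $\varphi\in C^\infty_c(\Omega\times(0,T))$ and passing to the limit using the strong $L^2$ convergence already at hand.
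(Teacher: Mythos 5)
Your proposal follows essentially the same route as the paper: use the estimate \eqref{e:con} to force the limit to take only the values $\pm 1$, use the one-sided bound on $u_x$ from Lemma \ref{l:monotonicity} to exclude upward jumps and obtain a single downward switch, and fix the switch location by conservation of $\int_\Omega u\,dx$. You are in fact somewhat more explicit than the paper about the compactness/extraction step and about how the no-flux boundary terms cancel to give mass conservation, but the argument is the same in substance.
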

\begin{proof}
First we observe that the total mass of $u$ and $h$ is conserved in time. Since the initial functions $f_I$ and $g_I$ satisfy 
assumption \ref{a:schwartz}, there exists a constant $\bar{c} \in \mathbb{R}$, such that $(u_I)_x(x) \leq \bar{c}$. Hence
$u_x(x,t) \leq \bar{c}$ for all $t > 0$ and the limiting function cannot jump up from $-1$ to $1$. \\
Using estimate \eqref{e:con} we conclude that the limiting function can only take the values $\pm 1$ and has a single
jump down from $1$ to $-1$ at $\tilde{p} \in \Omega$. The location of the jump $\tilde{p}$  (which corresponds to the stationary
price of the traded good) is determined by the conservation of mass, i.e.
\begin{align*}
\int_{-1}^{\tilde{p}} 1~dx - \int_{\tilde{p}}^1 1~dx = m_f - m_g,
\end{align*}
which gives us the limit \eqref{e:limitu}.
\end{proof}

\subsection{Limiting behavior for general $h$:} Next we identify the limiting solutions for the full system \eqref{e:hu},
using the same arguments as in the previous subsection.  
\begin{lem}\label{l:monoton2}
Let $\varepsilon > 0$, $\Omega = (-1,1)$ and let the initial datum $u_I(x)$ satisfy assumption \ref{a:schwartz}.
Then
\begin{align*}
\lvert u \rvert \leq \lvert h \rvert = h \text{ and } 
\int_0^T \int_{\Omega} (h^2-u^2)~dx dt \leq 4 \varepsilon(1+T) \max_{x\in\Omega} h(x,t) .
\end{align*}
\end{lem}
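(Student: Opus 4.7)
The plan is to mirror, essentially verbatim, the first-moment argument that was carried out for the $h \equiv 1$ case in the preceding subsection, now keeping $h$ general. The pointwise bound $|u|\le h$ is really a restatement of the nonnegativity of $f$ and $g$ (which survives the passage $c\to\infty$ because the original Boltzmann system \eqref{e:boltzprice} has transport--diffusion structure preserving positivity, as already observed in Proposition 3.1 and in the paragraph after it). So $h=f+g\ge 0$ and $|u|=|f-g|\le f+g=h$; no further work is needed here.

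For the integral estimate I would test equation \eqref{e:hu} against $x$. Differentiating under the integral,
\begin{equation*}
\frac{d}{dt}\int_{\Omega} u(x,t)\,x\,dx
 = \int_{\Omega} x\,u_{xx}\,dx + \frac{1}{2\varepsilon}\int_{\Omega} x\,(h^2-u^2)_x\,dx .
\end{equation*}
Two integrations by parts produce the interior term $-\frac{1}{2\varepsilon}\int_{\Omega}(h^2-u^2)\,dx$, the interior term $-\int_{\Omega}u_x\,dx=-(u(1,t)-u(-1,t))$, plus the boundary contributions $x u_x\big|_{-1}^{1}+\frac{x}{2\varepsilon}(h^2-u^2)\big|_{-1}^{1}$. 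The point is that the no-flux boundary condition \eqref{e:noflux}, namely $-u_x(\pm1,t)=\frac{1}{2\varepsilon}(h^2-u^2)(\pm1,t)$, makes these boundary contributions cancel exactly. What remains is
\begin{equation*}
\frac{d}{dt}\int_{\Omega}u\,x\,dx
 = -\bigl(u(1,t)-u(-1,t)\bigr) - \frac{1}{2\varepsilon}\int_{\Omega}(h^2-u^2)\,dx .
\end{equation*}

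Integrating in $t$ from $0$ to $T$ and solving for the desired quantity gives
\begin{equation*}
\int_{0}^{T}\!\!\int_{\Omega}(h^2-u^2)\,dx\,dt
 = 2\varepsilon\!\left[\int_{\Omega}u_I\,x\,dx-\int_{\Omega}u(x,T)\,x\,dx-\int_{0}^{T}\!\bigl(u(1,t)-u(-1,t)\bigr)dt\right].
\end{equation*}
Using $|u|\le h\le \max_{\Omega\times[0,T]} h$ together with $\int_{-1}^{1}|x|\,dx=1$ bounds the first two moment terms by $\max h$ each, and the boundary-trace term by $2T\max h$. Collecting yields the claimed inequality $\int_{0}^{T}\!\int_{\Omega}(h^2-u^2)\,dx\,dt\le 4\varepsilon(1+T)\max_{x\in\Omega}h(x,t)$.

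The only delicate point is checking that the boundary flux for $u$ carries exactly the right sign to annihilate the endpoint contributions from both integrations by parts; once that cancellation is recorded, the rest is the same bookkeeping as in the $h\equiv 1$ case, with the constant $1$ replaced by $\max h$. I do not foresee a serious obstacle, and regularity for the required integration by parts is supplied by Proposition 3.1 (smooth, bounded solutions on $\Omega\times[0,T]$).
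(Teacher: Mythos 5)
Your proposal is correct and follows exactly the route the paper intends: the paper's "proof" of this lemma is literally the one-line remark that it "follows the arguments of the previous subsection," i.e.\ the first-moment test against $x$ with the boundary terms cancelled by the no-flux condition \eqref{e:noflux}, plus $|u|\le h$ from nonnegativity of $f$ and $g$. You have simply written out in full (and with the correct signs and the boundary cancellation made explicit) what the paper leaves implicit.
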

The proof follows the arguments of the previous subsection. From Lemma \ref{l:monoton2} we conclude that 
\begin{align}
u^2 \rightarrow h^2 \text{ in } L^1(\Omega \times (0,T)).
\end{align}

Next we show that the function $u_x$ can not have a jump up from $-h$ to $h$. To do so we consider the function $v = u_x$, which satisfies
\begin{align}
\begin{split}
  v_t(x,t) - v_{xx}(x,t) &= \prefac (h^2(x,t)-u^2(x,t)){}\\ 
&= \frac{1}{\varepsilon} \left[(h(x,t) h_x(x,t))_x - v^2(x,t)\right] - \frac{1}{\varepsilon} u(x,t) v_x(x,t). 
\end{split}
\end{align}
Then the standard maximum principle implies that $v^2(x,t) > \sup_{x\in\Omega}(h(x,t)h_x(x,t))_x$ and
\begin{align*}
u_x(x,t) \leq \max_{x\in\Omega} (\sup_{x\in\Omega}(u_x(x,0)), \sqrt{\max_{x\in\Omega}(0,\sup((hh_x)_x))}).
\end{align*}
Therefore $u = u(x,t)$ cannot have a jump upward and we deduce that the limiting function can be written as
\begin{align}
u(x,t) &= 
\begin{cases}
h(x,t) &\text{ for } x < p(t)\\
-h(x,t) &\text{ for } x > p(t),
\end{cases}
\end{align}
where $p = p(t)$ denotes the position of the jump, i.e. the price of the traded good. It is uniquely determined
for all $t>0$ by
\begin{align}\label{e:masscon}
m_f = \int_{-1}^{p(t)} h(x,t)~dx ~ \text{ or, equivalently } ~ m_g = \int_{p(t)}^1 h(x,t)~dx.
\end{align}
The previous calculations lead to the following theorem:
\begin{thm}
Let assumption \ref{a:schwartz} be satisfied. Then there exist unique limiting functions $(u,h)$ of system \eqref{e:hu}
as $\varepsilon \rightarrow 0$, which are given by
\begin{align*}
u(x,t) &= 
\begin{cases}
h(x,t) \text{ for } x < p(t)\\
-h(x,t) \text{ for } x > p(t),
\end{cases}
\end{align*}
where $p = p(t)$ is determined by \eqref{e:masscon} and $h = h(x,t)$ is the solution of the heat equation \eqref{e:hu_h} with
homogeneous Neumann boundary conditions.
\end{thm}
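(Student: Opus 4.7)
The plan is to follow the template set by Theorem \ref{t:idlimit1}, now with the smooth profile $h = h(x,t)$ playing the role that the constant $1$ played in the equilibrated case. First I would observe that equation \eqref{e:hu_h} decouples from $u$: the function $h$ solves the heat equation on $\Omega = (-1,1)$ with homogeneous Neumann boundary conditions and initial datum $h_I = f_I + g_I \geq 0$. By assumption \ref{a:schwartz} this yields a unique nonnegative smooth solution, independent of $\varepsilon$, together with uniform bounds on $\max_{\Omega\times[0,T]} h$ and $\sup_{\Omega\times[0,T]}(hh_x)_x$. Plugging these bounds into the maximum principle argument for $v = u_x$ that precedes the theorem produces a uniform one-sided estimate $u^\varepsilon_x(x,t) \leq M$, with $M$ depending only on $h$, $T$ and $u_I$, but not on $\varepsilon$.

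Next, from $|u^\varepsilon| \leq h$ the family $\{u^\varepsilon\}$ is uniformly bounded in $L^\infty(\Omega\times(0,T))$. Combined with the uniform one-sided bound on $u^\varepsilon_x$, this gives a uniform $\mathrm{BV}_x$ bound for $u^\varepsilon(\cdot,t)$ (one-sided Lipschitz plus bounded amplitude controls the total variation), so along a subsequence $u^\varepsilon \to u$ almost everywhere and in $L^2(\Omega\times(0,T))$. Passing $\varepsilon \to 0$ in the $L^1$ inequality of Lemma \ref{l:monoton2} then yields $u^2 = h^2$ a.e., so on $\{h>0\}$ the limit satisfies $u = s\, h$ with $s(x,t) \in \{-1,+1\}$, while the value of $u$ on $\{h=0\}$ is fixed to zero by $|u| \leq h$.

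The one-sided estimate $u_x \leq M$ survives in the limit (as a distributional one-sided bound), and therefore rules out upward jumps of $u$ in $x$. Since $u$ toggles between $+h$ and $-h$ and cannot jump upward on $\{h>0\}$, at each fixed $t$ there is at most one switch, necessarily from $+h$ to $-h$, at a single point $p(t) \in \Omega$. To identify $p(t)$ I would use conservation of mass: integrating \eqref{e:limitsys} and using the no-flux boundary conditions gives $\int_\Omega f\,dx = m_f$ and $\int_\Omega g\,dx = m_g$ for all $t \geq 0$, hence $\int_\Omega u(\cdot,t)\,dx = m_f - m_g$ and $\int_\Omega h(\cdot,t)\,dx = m_f + m_g$. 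Substituting the piecewise form of $u$ into the first identity produces $\int_{-1}^{p(t)} h\,dx = m_f$, which is precisely \eqref{e:masscon}. Strict monotonicity of $x \mapsto \int_{-1}^x h(y,t)\,dy$ on the connected support of $h(\cdot,t)$ determines $p(t)$ uniquely, so the limit does not depend on the subsequence and the whole family converges.

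The main obstacle is making the "no upward jump" property rigorous under the limit: weak-$\ast$ convergence in $L^\infty$ alone does not exclude fine oscillations between $+h$ and $-h$. The uniform one-sided bound on $u^\varepsilon_x$ is precisely the ingredient that upgrades the weak limit to an \emph{a.e.} one with the correct monotone structure. The delicate step is verifying that this bound survives up to the boundary $x = \pm 1$, where the flux condition \eqref{e:noflux} is nonlinear in $u$; this is likely handled by combining the interior maximum principle for $v = u_x$ with a boundary analysis based on the sign of $h^2 - u^2$ at $x = \pm 1$.
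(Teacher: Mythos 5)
Your proposal follows essentially the same route as the paper: the $L^1$ estimate of Lemma \ref{l:monoton2} forces $u^2 \rightarrow h^2$, the maximum principle applied to $v = u_x$ gives an $\varepsilon$-uniform one-sided upper bound on $u_x$ (depending only on $u_I$ and $\sup (hh_x)_x$) that excludes upward jumps, and conservation of mass pins down the single switch point $p(t)$ via \eqref{e:masscon}. The BV-compactness step you add to justify passing to an a.e.\ limit is a detail the paper leaves implicit, but the argument is the same.
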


\begin{rem}\label{r:longtime}
The behavior of the price $p = p(t)$ is determined by the conservation of mass. This implies that
\begin{align*}
m_f-m_g = \int_{\Omega} u(x,t)~dx = \int_{-1}^{p(t)} h(x,t)~dx - \int_{p(t)}^1 h(x,t)~dx.
\end{align*}
Differentiation of the later with respect to time yields $0 = 2 h(p(t),t) p'(t) + 2h_x(p(t),t))$.
Hence we deduce that the evolution of the price in time is given by
\begin{align}
p'(t) = -\frac{h_x(p(t),t)}{h(p(t),t)}.
\end{align}
The function $h = h(x,t)$ solves the heat equation and converges exponentially fast to its steady state, given by
\begin{align*}
h(x,t) \rightarrow \int_{-1}^1 h_I(x)~dx = m_f + m_g \text{ as } t \rightarrow \infty.
\end{align*}
This implies exponential convergence of the price $p = p(t)$, since $p'(t) = - (\ln h(p(t),t))_x$. 
\end{rem}

\section{Numerical simulations}\label{s:numerics}
In this last section we illustrate the behavior of the limiting system with numerical experiments.
All simulations are performed on the interval $\Omega = [-1,1]$ with no-flux boundary conditions
\eqref{e:noflux}. We split the interval into $N=4000$ equidistant intervals for size $\Delta x = 5 \times 10^{-3}$.
System \eqref{e:hu} is discretized using a finite difference discretization, i.e.
\begin{subequations}\label{e:hu_fd}
\begin{align}
\dot{h}_i &= \frac{1}{\Delta x^2} (h_{i+1}-2h_i+h_{i-1})\\
\dot{u}_i &= \frac{1}{\Delta x^2} (u_{i+1}-2u_i+u_{i-1}) + \frac{1}{4 \varepsilon \Delta x }( h_{i+1}^2 - u_{i+1}^2-h_{i-1}^2+u_{i-1}^2).
\end{align}
\end{subequations}
The resulting system of ODEs is solved using an explicit 4th-order Runge-Kutta method (implemented
within the GSL library).  \\

We illustrate the behavior of system \eqref{e:hu} for a not well prepared initial data $f_I$ and 
$g_I$, i.e. the function $f$ is split into two groups with $g$ in between. We choose the following
set of parameters
\begin{align*}
\varepsilon = 5\times 10^{-2} \text{ and } \sigma = 0.1.
\end{align*}
 The evolution of both function is illustrated in Figure \ref{f:notwellprep}.
\begin{figure}
\begin{center}
\subfigure[$t=0$]{\includegraphics[angle=270, width=0.3 \textwidth]{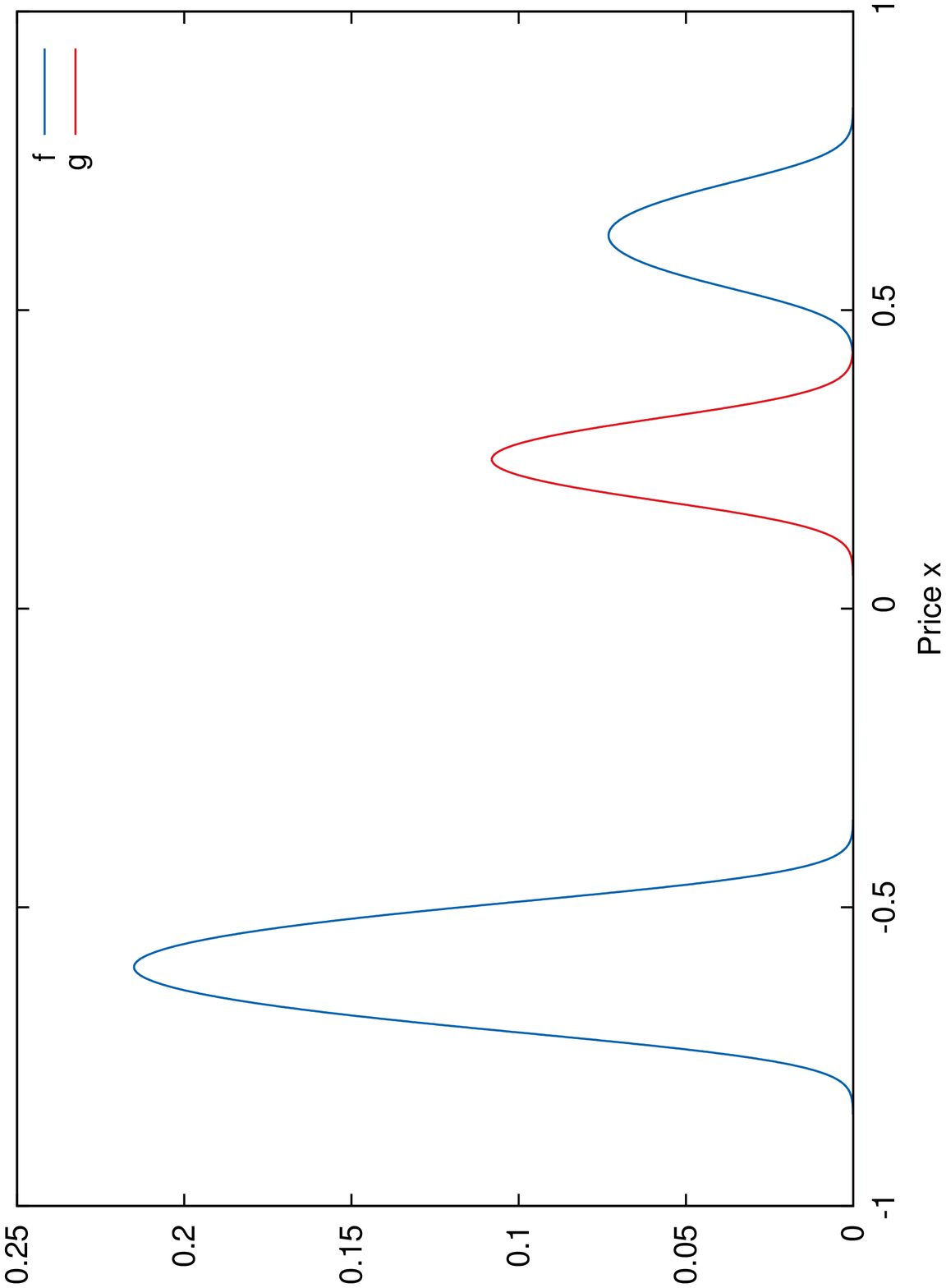}}\hspace*{0.1cm}
\subfigure[$t=0.5$]{\includegraphics[angle=270, width=0.3 \textwidth]{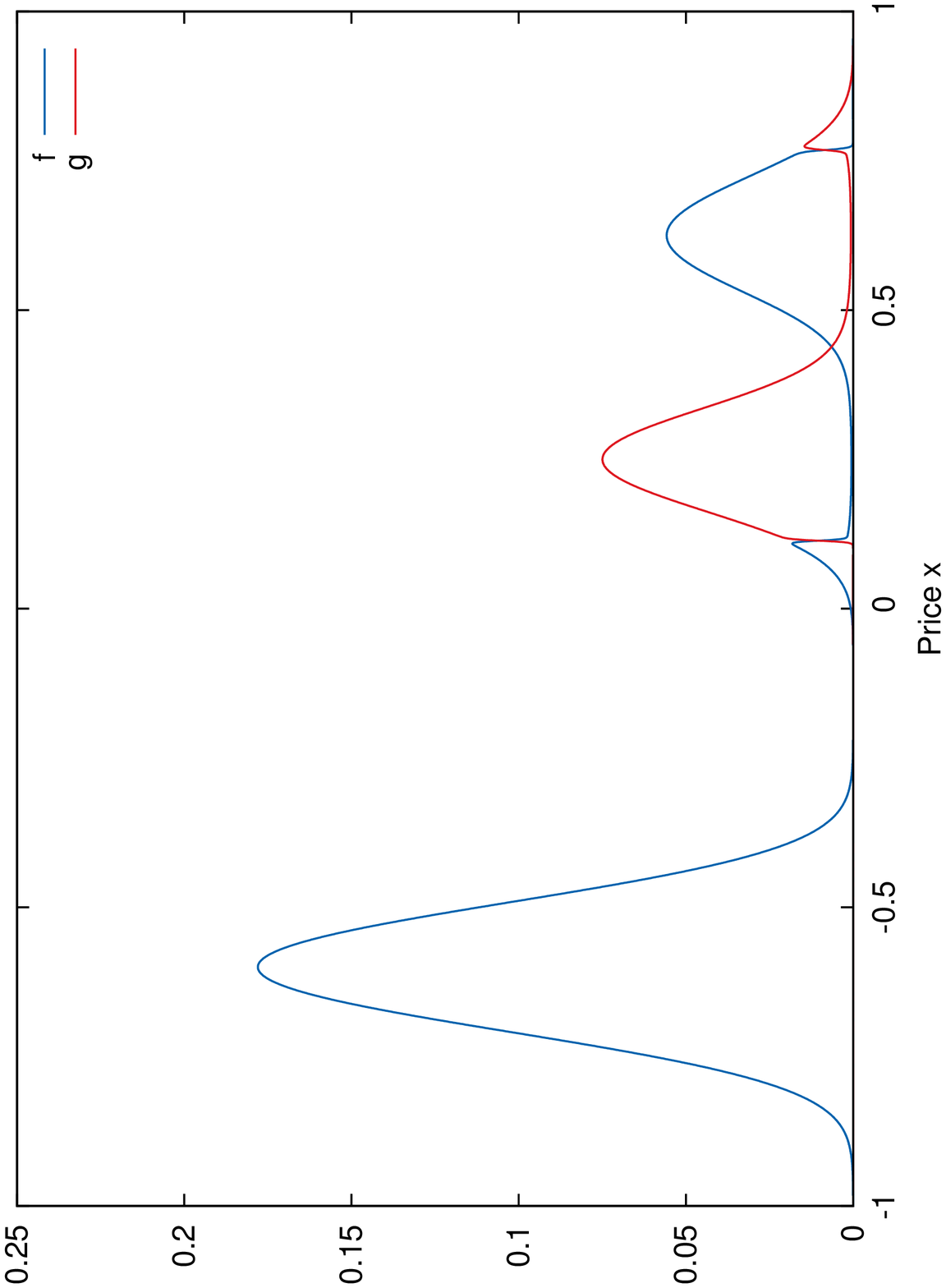}}\hspace*{0.1cm}
\subfigure[$t=1$]{\includegraphics[angle=270, width=0.3 \textwidth]{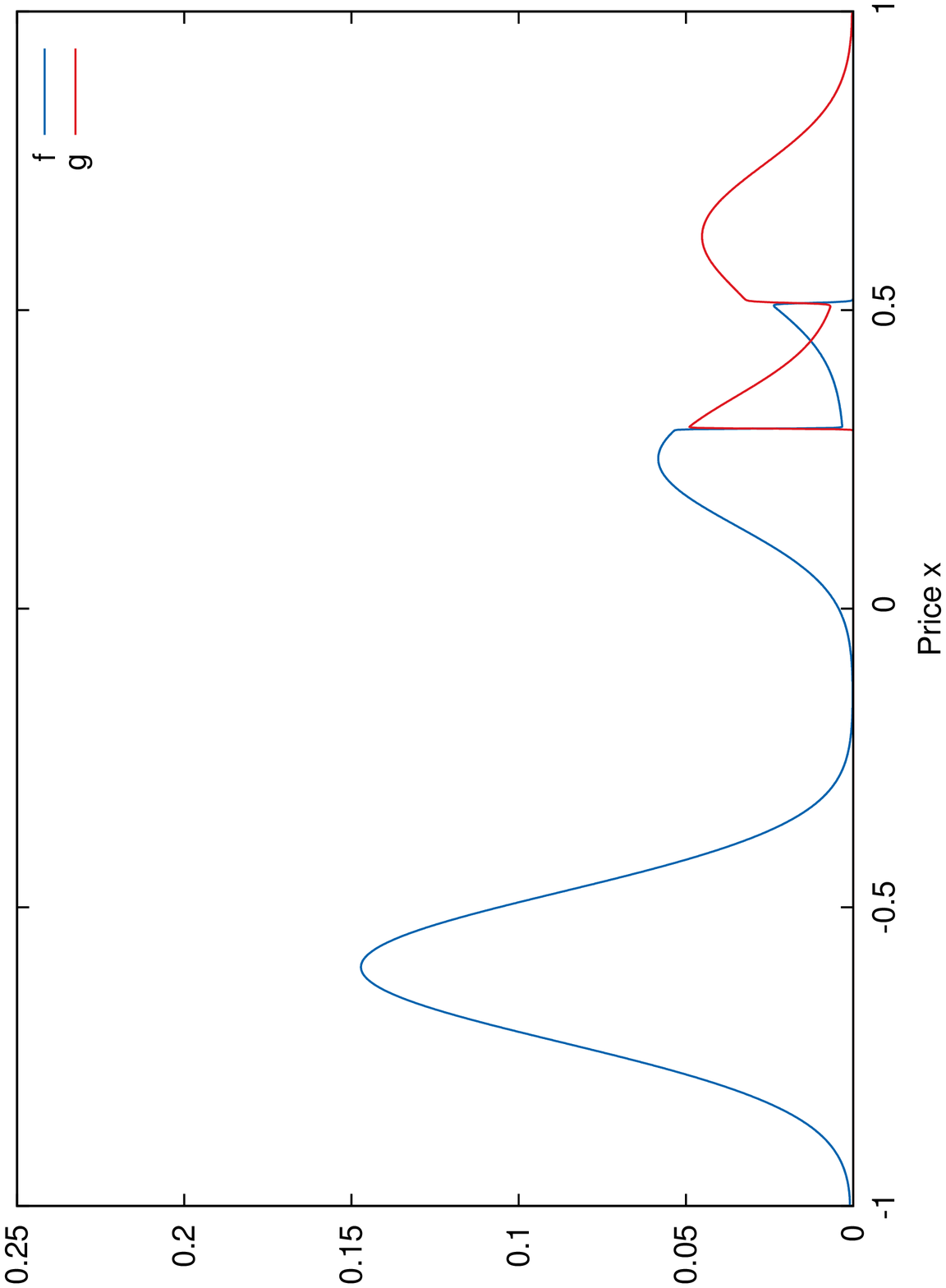}}\\
\subfigure[$t=5$]{\includegraphics[angle=270, width=0.3 \textwidth]{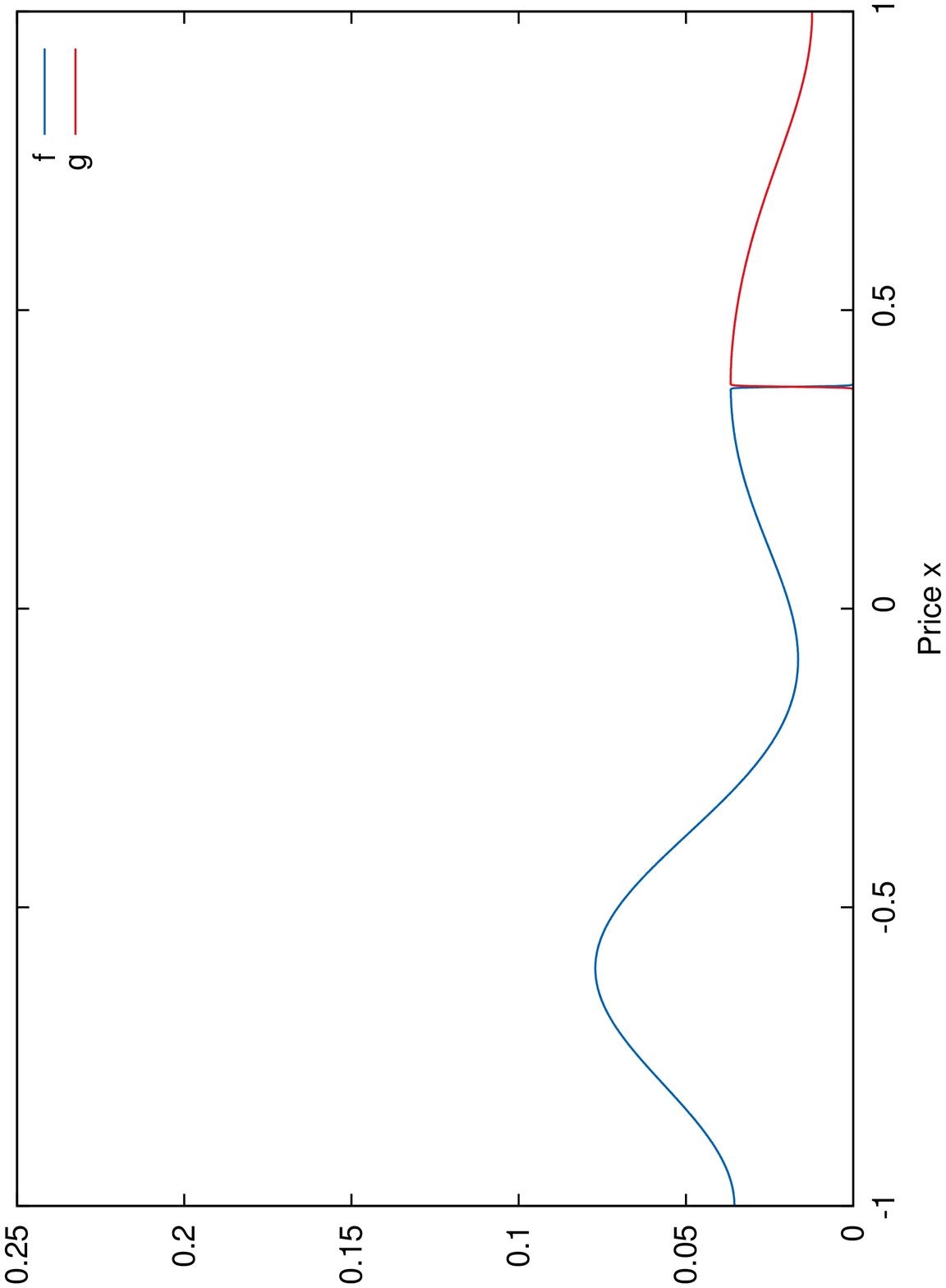}}\hspace*{0.1cm}
\subfigure[$t=10$]{\includegraphics[angle=270, width=0.3 \textwidth]{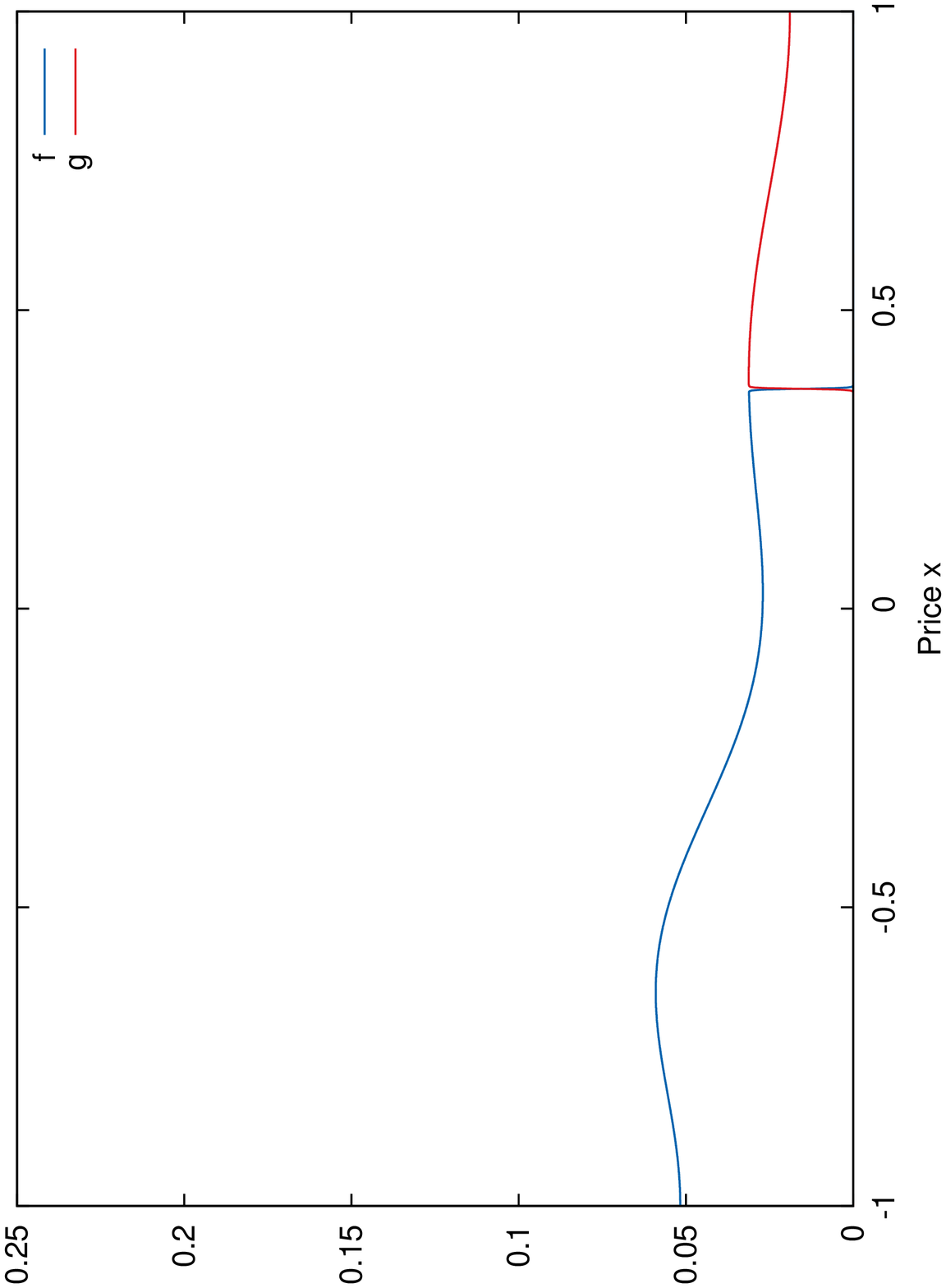}}\hspace*{0.1cm}
\subfigure[$t=30$]{\includegraphics[angle=270, width=0.3 \textwidth]{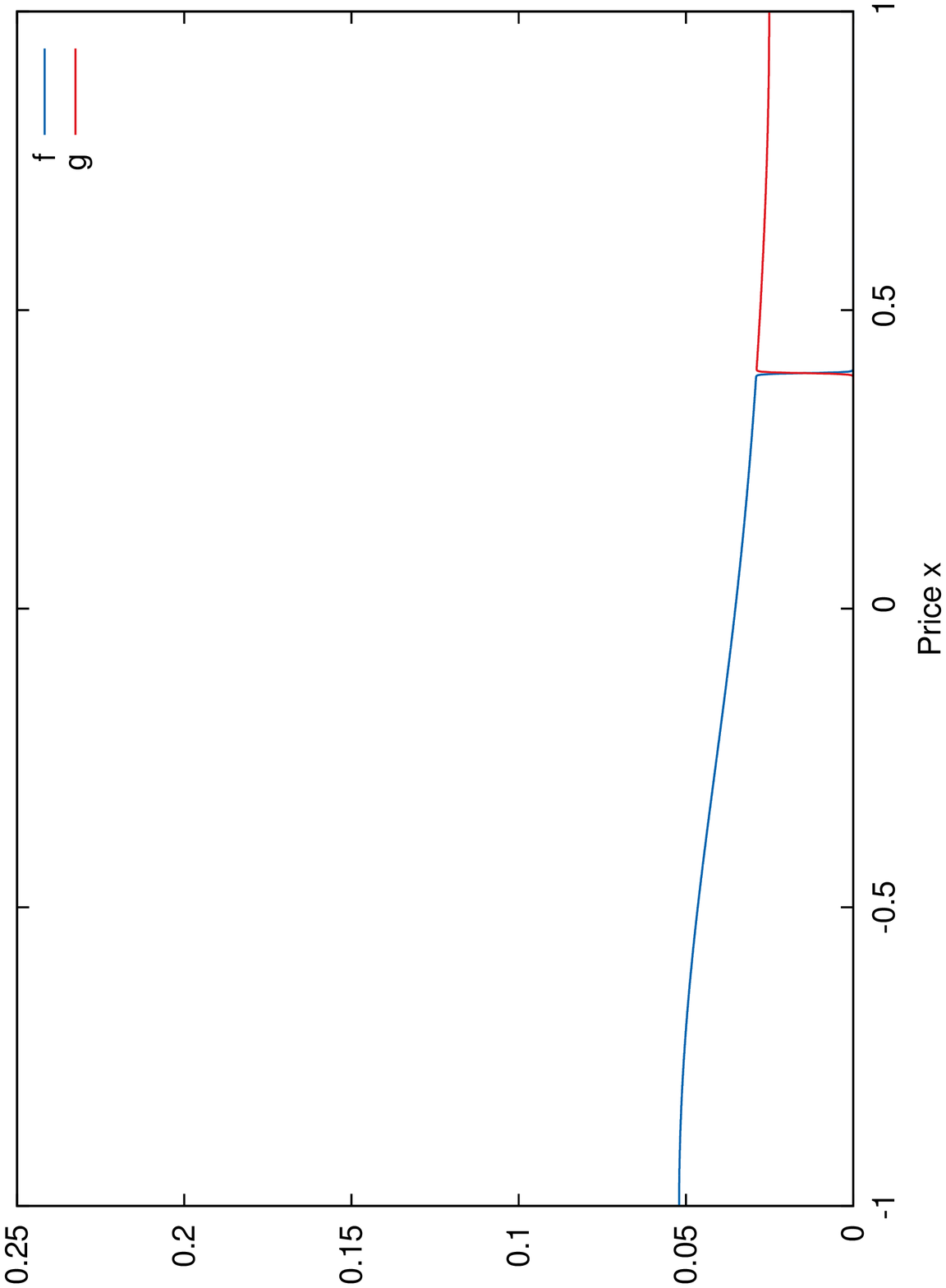}}
\end{center}
\caption{Evolution of the buyer and vendor density in the case of not-well prepared initial data}\label{f:notwellprep}
\end{figure}
We observe the fast segregation of $f$ and $g$ and the formation of a unique interface, which corresponds
to the price $p = p(t)$ in time. This behavior is not unexpected since system \eqref{e:limitsys} has
a similar structure as classical segregation or reaction-diffusion models. Furthermore we observe
a fast equilibration of the price $p = p(t)$ in time, as discussed in Remark \ref{r:longtime}. 

\section{Conclusion}
In this paper we study the asymptotic behavior of a Boltzmann type price formation model, which
describes the trading dynamics in a financial market with high trading frequencies and low transaction
costs. We identify the limiting solutions as the number of transactions tends to infinity and observe
an exponentially fast equilibration of the price in time. Numerical simulations illustrate that 
uneconomic situations, like trading at different prices, are 'corrected' quickly. Hence we conclude
that small fluctuations in the trading frequency or the transaction costs influence the price on
a very short time scale only.
\bibliographystyle{siam}
\bibliography{bpf}

\medskip

{\bf Acknowledgement.}
MTW acknowledges support from the Austrian Science Foundation FWF via the Hertha-Firnberg project T456-N23.
\medskip

\end{document}